\def\ZZ{{\mathbb Z}}
\def\RR{{\mathbb R}}
\theoremstyle{plain}
\newtheorem{theorem}{Theorem}
\newtheorem{lemma}[theorem]{Lemma}
\newtheorem{corollary}[theorem]{Corollary}
\newtheorem{proposition}[theorem]{Proposition}
\theoremstyle{definition}
\theoremstyle{remark}
\title{\bf New Upper Bounds on the Distance Domination Numbers of Grids}
\author{Armando Grez \\
\small Department of Mathematics \\[-0.8ex]
\small Florida Gulf Coast University \\[-0.8ex] 
\small Fort Myers, Florida, 33965\\
\small\tt agrez@eagle.fgcu.edu\\
\and
 Michael Farina\qquad \\
\small Department of Mathematics\\[-0.8ex]
\small Florida Gulf Coast University \\[-0.8ex]
\small Fort Myers, Florida, 33965\\
\small\tt mafarina6867@eagle.fgcu.edu
}
\date{\dateline{July 15, 2014}{ Sept 1, 2014}{Oct 30, 2014}\\
\small Mathematics Subject Classifications: 05C69, 05C12, 05C30}
\begin{document}

\maketitle


\begin{abstract}
In his 1992 Ph.D. thesis Chang identified an efficient way to dominate $m \times n$ grid graphs and conjectured that his construction gives the 
most efficient dominating sets for relatively large grids.
In 2011 Gon\c{c}alves, Pinlou, Rao, and Thomass\'e proved Chang's conjecture, establishing a closed formula for the domination number of a grid.
In March 2013  Fata, Smith and Sundaram 
established upper bounds for the $k$-distance domination numbers of grid graphs by generalizing Chang's construction of dominating sets to $k$-distance dominating sets. 
In this paper we improve the upper bounds established by Fata, Smith, and Sundaram for the $k$-distance domination numbers of grids.  
\end{abstract}

\section{Introduction}
Let $G = (V,E)$ denote a graph with vertex set $V$ and edge set $E$.  We say that a subset $S$ of $V$ is a \textit{dominating set of $G$} 
if every vertex in $G$ is either in $S$ or adjacent to at least one vertex in $S$. 
The \textit{domination number} of a graph $G$ is defined to be the cardinality of the smallest dominating set in $G$ and is denoted by $\gamma{(G)}$.

We define the \emph{distance} between two vertices $v,w \in V$ to be the minimum number of edges in any path connecting $v$ and $w$ in $G$.  
We denote the distance between $v$ and $w$ by $d(v,w)$.  
We say that a set $S$ is a $k$-\emph{distance dominating set} of $G$ if every vertex $v$ in $G$ is either in $S$ or there is a vertex $w \in S$ with $d(v,w) \leq k$,
and we define the $k$-\emph{distance domination number} of $G$ to be the size of the smallest $k$-distance dominating set of $G$.
For a comprehensive study of graph domination and its variants we refer the interested reader to the two excellent texts by Haynes, Hedetniemi and Slater \cite{HayHedSla98,HayHedSla98b}.

This paper studies $k$-distance domination numbers on $m \times n$ grid graphs, which generalize domination numbers of grid graphs.    
For the past three decades, mathematicians and computer scientists
searched for closed formulas to describe the domination numbers of $m \times n$ grids.  This search was recently rewarded with a proof of a 
closed formula for the domination number of any $m \times n$ grid with $m \geq n \geq 16$ \cite{GonPinRaoTho11}.  
We recount a brief history of the investigation here, and henceforth we let $G_{m,n}$ denote an $m \times n$ grid graph.

In 1984, Jacobson and Kinch \cite{JacKin84} started the hunt for domination numbers of grids by publishing closed formulas for the values of $\gamma(G_{2,n})$, $\gamma(G_{3,n}),$ and $\gamma(G_{4,n})$. 
In 1993, Chang, Clark, and Hare \cite{ChaCla93} extended these results by finding formulas for $\gamma(G_{5,n})$ and $\gamma(G_{6,n})$. 
In his Ph.D. thesis, Chang \cite{Cha92} constructed efficient dominating sets for $G_{m,n}$ proving that when $m$ and $n$ are greater than $8$, 
the domination number $\gamma(G_{m,n})$ is bounded above by the formula
\begin{equation} \gamma(G_{m,n}) \leq \left \lfloor \frac{(n+2)(m+2)}{ 5}\right \rfloor - 4 . \end{equation} \label{chang} Chang also conjectured that 
equality holds in Equation (\ref{chang}) when $n \geq m \geq 16$.  
 In an effort to confirm Chang's conjecture, a number of mathematicians and computer scientists began exhaustively computing the values of $\gamma(G_{m,n})$.
In 1995, Hare, Hare, and  Hedetniemi \cite{HarHarHed86} developed a polynomial time algorithm to compute $\gamma(G_{m,n} )$ when $m$ is fixed.    
Alanko, Crevals, Isopoussu, \"Ostergard, and Petterson \cite{Ala11} computed $\gamma(G_{m,n})$ for $m,n \leq 29$ in addition to $m \leq 27$ and $n \leq 1000$.
Finally in 2011, Gon\c{c}alves, Pinlou, Rao, and Thomass\'{e} \cite{GonPinRaoTho11} confirmed Chang's conjecture for all $n \geq 16$.  
Their proof uses a combination of analytic and computer aided techniques for the large cases $(n \geq m \geq 24)$ and exhaustive calculations for all smaller cases.
 
While the concept of graph domination has been generalized in countless ways including distance domination, $R$-domination, double-domination, and $(t,r)$-broadcast domination to name just a few 
\cite{Sla76, Hen98, HarHay00, JotPusSugSwa2011,BIJM2014}, relatively little is known about these other domination theories in grid graphs.  However, in 2013, Fata, Smith, and Sundaram 
generalized Chang's construction of dominating sets for grids to construct distance dominating sets that 
give the following upper bound on $k$-distance domination numbers of grids \[ \gamma_k(G_{m,n}) \leq \left \lceil \frac{(m+2k)(n+2k)}{2k^2+2k+1} + \frac{2k^2+2k+1}{4}  \right \rceil \] \cite[Theorem V.10]{FatSmiSun13}.    
In 2014, Blessing, Insko, Johnson, and Mauretour improved the previous upper bounds on 2-distance domination number to 
$ \gamma_2\left (G_{m,n} \right ) \leq \left \lfloor \frac{(m+4)(n+4)}{13} \right \rfloor - 4 $ for large $m$ and $n$, but they did not consider
$\gamma_k \left ( G_{m,n} \right )$ for $k \geq 3$  \cite[Theorem 3.7]{BIJM2014}.

The main result of this paper improves the upper bounds established by Fata, Smith, and Sundaram:
\begin{theorem} 
Assume that $m$ and $n$ are greater than $2(2k^2+2k+1)$.  Then the $k$-distance domination number of an $m \times n$ grid graph $G_{m,n}$ is bounded above by the following formula: 
\[ \gamma_{k} (G_{m,n} ) \leq  \left \lfloor  \frac{ (m+2k)(n+2k) }{2k^2+2k+1} \right \rfloor -4  .\]
\end{theorem}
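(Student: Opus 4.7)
The plan is to follow the blueprint of Chang's construction, as adapted to $k$-distance domination by Fata, Smith, and Sundaram, while trimming the additive slack of order $(2k^{2}+2k+1)/4$ present in their bound. First I would construct an explicit periodic pattern of dominators on an enlarged grid of dimensions $(m+2k)\times(n+2k)$; second, verify that the restriction of this pattern to the original $m\times n$ grid is a $k$-distance dominating set; third, bound its cardinality by $\lfloor (m+2k)(n+2k)/(2k^{2}+2k+1)\rfloor$; and fourth, identify four specific dominators near the corners whose removal preserves domination.

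For the pattern itself, a single vertex $k$-distance dominates a ``diamond'' (closed $\ell_{1}$-ball) of size $2k^{2}+2k+1$, and these diamonds tile $\mathbb{Z}^{2}$ when their centers lie on the lattice $\Lambda$ generated by $(k+1,k)$ and $(-k,k+1)$, whose fundamental domain has area exactly $2k^{2}+2k+1$. Overlaying $\Lambda$ on the padded $(m+2k)\times(n+2k)$ rectangle guarantees that every vertex of the original grid is covered by some lattice point that itself lies inside the padded region; let $D$ denote the set of such lattice points. This is essentially the Fata--Smith--Sundaram set, but I would reap the improvement by choosing the phase of $\Lambda$ carefully relative to the rectangle.

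To bound $|D|$, I would translate $\Lambda$ so that the corners of the padded rectangle sit at controlled positions with respect to the lattice; a direct count of lattice points inside a rectangle with this phase then collapses the ceiling plus additive term $(2k^{2}+2k+1)/4$ of \cite[Theorem V.10]{FatSmiSun13} into a single floor. The hypothesis $m,n>2(2k^{2}+2k+1)$ leaves room to make this phase choice while still having at least one full fundamental domain between any two corners of the padded region.

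For the $-4$ correction I would exhibit, in each of the four corners, a single lattice point whose diamond meets the actual $m\times n$ grid only in vertices that are already within distance $k$ of another element of $D$; removing these four points then yields the stated bound. The main obstacle I anticipate is precisely this corner-trimming step: the offset of $\Lambda$ relative to the grid boundary depends on $m$ and $n$ modulo $2k^{2}+2k+1$, and one has to verify coordinate-by-coordinate that the vertices orphaned by each deletion are re-covered by a specific neighboring dominator. The size hypothesis on $m$ and $n$ is what permits a uniform argument and guarantees that the four corners behave independently, so their individual savings of one dominator each combine cleanly to produce the additive $-4$.
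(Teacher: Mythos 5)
Your overall architecture matches the paper's: pad the grid by $k$ on all sides, take a coset of the lattice generated by $(k+1,k)$ and $(-k,k+1)$ (equivalently, a fiber $\phi_k^{-1}(\bar\ell)$ of the map $(i,j)\mapsto (k+1)i+kj$ modulo $p=2k^2+2k+1$) whose diamonds tile $\ZZ^2$, choose the coset meeting the padded rectangle in at most $\lfloor (m+2k)(n+2k)/p\rfloor$ points, and then save four more vertices at the corners. One remark on the cardinality bound: the paper does not engineer a special phase and count directly; it observes that the $p$ cosets partition the padded rectangle, so their counts sum to $(m+2k)(n+2k)$ and some coset has at most the floor of the average. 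Your ``controlled phase plus direct count'' would have to be carried out explicitly as a function of $m,n \bmod p$, which is harder than the averaging argument and unnecessary.

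The genuine gap is in your $-4$ step. Because the $k$-diamonds of distinct points of a fixed coset are pairwise disjoint (they tile $\ZZ^2$), a dominator $s$ in the padded region covers a set of grid vertices, namely $N_k[s]\cap G_{m,n}$, that no other dominator covers; hence $s$ is removable with no other changes if and only if its diamond misses $G_{m,n}$ entirely. Your plan to find, at each corner, a point ``whose diamond meets the grid only in vertices already within distance $k$ of another element of $D$'' is therefore asking for a configuration that occurs only for certain phases --- this is exactly the paper's Case 1, where the line $L_1$ through the two extreme corner dominators has negative slope. For the remaining phases (Cases 2 and 3) the paper must first locally perturb the set: it shifts every dominator northwest of $L_1$ one unit east (and, in Case 3, additionally one unit south, plus moving one dominator $z$ up by one unit) so that after these shifts the orphaned vertices are absorbed by neighboring dominators and the corner point $s$ can then be deleted; the hypothesis $m,n>2p$ is what keeps the four corners' shifts from interfering. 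Without this shifting idea the corner savings cannot be realized in general, so the proof as you outline it does not go through.
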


Figure \ref{tab:my_label} illustrates how our main theorem improves on the bounds for $3$-distance domination number $\gamma_3(G_{m,n})$ given by Fata, Smith, and Sundaram in 2013.

\begin{table}[ht]
\centering
\begin{tabular}{c c c c}
 \hline \hline
  M & N & New Bound & Old Bound\\
 \hline
 51 & 52 & 128 & 139\\
 53 & 54 & 137 & 148\\
 55 & 56 & 147 & 158\\
 57 & 58 & 157 & 168\\
 59 & 60 & 167 & 178\\
 61 & 62 & 178 & 189\\
 63 & 64 & 189 & 200\\
 65 & 66 & 200 & 211\\

 \hline
 
\end{tabular}

\caption{Comparing upper bounds for $\gamma_3(G_{m,n}) $ }
\label{tab:my_label}
\end{table}

The rest of this paper proceeds as follows.
In Section \ref{section2} we describe an embedding of $G_{m,n}$ into the integer lattice $\ZZ^2$ and the $k$-distance neighborhood $Y_{m+2k,n+2k}$ of $G_{m,n}$.
Then we describe a family of efficient dominating sets for $\ZZ^2$ as the inverse images of a ring homomorphism 
$\phi_k: \ZZ^2 \rightarrow \ZZ_{2k^2+2k+1}$.
In Section \ref{section3} we prove that there exists an $\bar{\ell} \in \ZZ_{2k^2+2k+1}$ such 
that $\left |\phi_k^{-1} \left ( \bar{\ell} \right )  \cap Y_{m+2k,n+2k}  \right | \leq \left \lfloor  \frac{ (m+2k)(n+2k) }{2k^2+2k+1} \right \rfloor$
in Corollary \ref{boundY}.  In Section \ref{section4} we prove that when $m$ and $n$ are sufficiently large, we can remove at least one vertex from each 
corner of $\phi_k^{-1}  \left ( \bar{\ell} \right )  \cap Y_{m+2k,n+2k} $ to obtain a dominating set for $G_{m,n}$ in Lemma \ref{remove}.  
Our main result then follows immediately from Corollary \ref{boundY} and Lemma \ref{remove}.

\section{$k$-Distance Dominating Sets in $\ZZ^2$} \label{section2}
Let $\ZZ \times \ZZ = \ZZ^2$ denote the integer lattice in $\RR^2$. We embed an $m \times n$ grid graph $G_{m,n}$  into $\ZZ^2$ by identifying $G_{m,n}$ with the following subset of $\ZZ^2$
\[ G_{m,n} = \{ (i,j) \in \ZZ^2 : 0 \leq i \leq m-1 \text{ and } 0 \leq j \leq n-1 \}. \]
We define a neighborhood $Y_{m+2k, n+2k}$ around $G_{m,n}$ in $\ZZ^2$ by adding $k$ rows and columns to the boundary of $G_{m,n}$.  
That is \[ Y_{m+2k, n+2k} = \{ (i,j) \in \ZZ^2: -k \leq i \leq m+k-1 \text{ and } -k \leq j \leq n+k-1 \}. \]

Fata, Smith, and Sundaram noted that a $k$-distance neighborhood of a vertex in $\ZZ^2$ is a diamond-shaped collection of vertices containing at most $2k^2+2k+1$ elements \cite[Lemma V.3]{FatSmiSun13}. To condense our notation, we will denote the number of vertices in a $k$-distance neighborhood
by $p = 2k^2+2k+1$.
We will now describe a family of dominating sets of the lattice $\ZZ^2$ as the inverse images under a ring homomorphism.  
We define a homomorphism $\phi_k: \ZZ \times \ZZ \rightarrow \ZZ_p$ by $(i,j) \mapsto (k+1) i+kj$. 
Let $\bar{\ell}$ denote an element of $\ZZ_p$.  One can easily verify that $\phi_k^{-1}\left ( \bar{\ell}  \right )$ is a 
$k$-distance dominating set of $\ZZ^2$ \cite[Lemma V.8]{FatSmiSun13}.  The inverse image $\phi_2^{-1}\left(\bar{0}\right)$ and the $2$-distance neighborhoods of a few of its elements are depicted in Figure \ref{inverseimage}.  
\begin{figure}[h]
\begin{picture}(210,210)(0,0)
	\linethickness{.1 mm}
	\multiput(110,10)(0,10){19}{\line(1,0){200}}
	\multiput(120,0)(10,0){19}{\line(0,1){200}}
        \put(210,100){\color{blue}{\circle*{5}}}
        \put(210,80){\color{red}{\line(1,1){20}}}   
        \put(210,80){\color{red}{\line(-1,1){20}}} 
        \put(210,120){\color{red}{\line(1,-1){20}}}   
        \put(210,120){\color{red}{\line(-1,-1){20}}}   
        
        \put(240,120){\color{blue}{\circle*{5}}}
        \put(240,100){\color{red}{\line(1,1){20}}}   
        \put(240,100){\color{red}{\line(-1,1){20}}} 
        \put(240,140){\color{red}{\line(1,-1){20}}}   
        \put(240,140){\color{red}{\line(-1,-1){20}}}

        \put(180,80){\color{blue}{\circle*{5}}}

        \put(150,60){\color{blue}{\circle*{5}}}
        \put(120,40){\color{blue}{\circle*{5}}}
        \put(270,140){\color{blue}{\circle*{5}}}
        \put(300,160){\color{blue}{\circle*{5}}}

        \put(190,130){\color{blue}{\circle*{5}}}
        
        \put(190,110){\color{red}{\line(1,1){20}}}   
        \put(190,110){\color{red}{\line(-1,1){20}}} 
        \put(190,150){\color{red}{\line(1,-1){20}}}   
        \put(190,150){\color{red}{\line(-1,-1){20}}}   
        
        \put(220,150){\color{blue}{\circle*{5}}}
        
        \put(220,130){\color{red}{\line(1,1){20}}}   
        \put(220,130){\color{red}{\line(-1,1){20}}} 
        \put(220,170){\color{red}{\line(1,-1){20}}}   
        \put(220,170){\color{red}{\line(-1,-1){20}}}   
        
        \put(160,110){\color{blue}{\circle*{5}}}
        
        \put(130,90){\color{blue}{\circle*{5}}}
        \put(250,170){\color{blue}{\circle*{5}}}
        \put(280,190){\color{blue}{\circle*{5}}}
        \put(170,160){\color{blue}{\circle*{5}}}
        \put(200,180){\color{blue}{\circle*{5}}}
        \put(140,140){\color{blue}{\circle*{5}}}
        \put(150,190){\color{blue}{\circle*{5}}}
        \put(120,170){\color{blue}{\circle*{5}}}
        
        \put(230,70){\color{blue}{\circle*{5}}}
        \put(260,90){\color{blue}{\circle*{5}}}
        \put(200,50){\color{blue}{\circle*{5}}}
        \put(170,30){\color{blue}{\circle*{5}}}
        \put(140,10){\color{blue}{\circle*{5}}}
        \put(290,110){\color{blue}{\circle*{5}}}
       
        \put(250,40){\color{blue}{\circle*{5}}}
        \put(280,60){\color{blue}{\circle*{5}}}
        \put(220,20){\color{blue}{\circle*{5}}}
        \put(270,10){\color{blue}{\circle*{5}}}
        \put(300,30){\color{blue}{\circle*{5}}}

	\linethickness{0.4mm}
	\put(210,0){\line(0,1){200}}
	\put(110,100){\line(1,0){200}}
\end{picture}
\caption{ The set $\phi_2^{-1}\left(\bar 0\right)$  } \label{inverseimage}
\end{figure}

Since the set $\phi_k^{-1}(\bar{\ell})$ is a $k$-distance dominating set of $\ZZ^2$ and the set $Y_{m+2k,n+2k}$ is a $k$-distance neighborhood of $G_{m,n}$,
the intersection of these sets $\phi_k^{-1}\left (\bar{\ell} \right ) \cap Y_{m+2k,n+2k}$ is a $k$-distance dominating set of $G_{m,n}$ for all $\bar{\ell} \in \ZZ_p$.
By moving each vertex in the set $\phi_k^{-1}\left (\bar{\ell} \right ) \cap \left ( Y_{m+2k,n+2k} - G_{m,n} \right )$ to its nearest neighbor inside $G_{m,n}$ we obtain a dominating set $S \subset G_{m,n}$. 
Figure \ref{figure1} illustrates this construction for 3-distance domination of $G_{6,6}$ (the resulting dominating set $S$ is highlighted in red).  

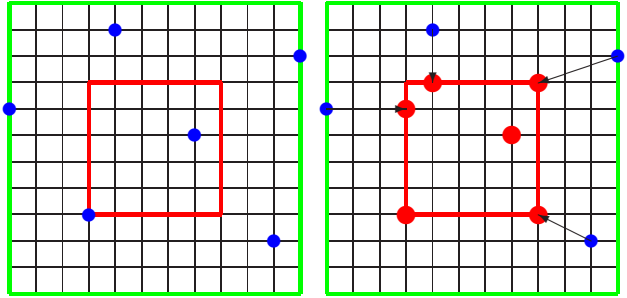
\begin{figure}[H]
\begin{picture}(410,110)(50,0)
\linethickness{.1 mm}
\multiput(130,10)(0,10){10}{\line(1,0){110}}
\multiput(140,0)(10,0){10}{\line(0,1){110}}
\linethickness{0.4mm}
    \put(160,30){\color{red}{\line(1,0){50}}}
    \put(210,30){\color{red}{\line(0,1){50}}}
    \put(160,30){\color{red}{\line(0,1){50}}}
    \put(160,80){\color{red}{\line(1,0){50}}}

    \put(130,0){\color{green}{\line(1,0){110}}}
    \put(240,0){\color{green}{\line(0,1){110}}}
    \put(130,0){\color{green}{\line(0,1){110}}}
    \put(130,110){\color{green}{\line(1,0){110}}}

\linethickness{.1 mm}
    \put(160,30){\color{blue}{\circle*{5}}}
  \put(200,60){\color{blue}{\circle*{5}}}
\put(130,70){\color{blue}{\circle*{5}}}
    \put(170,100){\color{blue}{\circle*{5}}}
\put(230,20){\color{blue}{\circle*{5}}}
\put(240,90){\color{blue}{\circle*{5}}}

\linethickness{.1 mm}
\multiput(250,10)(0,10){10}{\line(1,0){110}}
\multiput(260,0)(10,0){10}{\line(0,1){110}}
\linethickness{0.4mm}
    \put(280,30){\color{red}{\line(1,0){50}}}
    \put(330,30){\color{red}{\line(0,1){50}}}
    \put(280,30){\color{red}{\line(0,1){50}}}
    \put(280,80){\color{red}{\line(1,0){50}}}

    \put(250,0){\color{green}{\line(1,0){110}}}
    \put(360,0){\color{green}{\line(0,1){110}}}
    \put(250,0){\color{green}{\line(0,1){110}}}
    \put(250,110){\color{green}{\line(1,0){110}}}

\linethickness{.1 mm}
    \put(280,30){\color{red}{\circle*{7}}}
  \put(320,60){\color{red}{\circle*{7}}}
  
\put(250,70){\color{blue}{\circle*{5}}}
    \put(290,100){\color{blue}{\circle*{5}}}
\put(350,20){\color{blue}{\circle*{5}}}
\put(360,90){\color{blue}{\circle*{5}}}

\put(330,30){\color{red}{\circle*{7}}}
\put(290,80){\color{red}{\circle*{7}}}
\put(330,80){\color{red}{\circle*{7}}}
\put(280,70){\color{red}{\circle*{7}}}
\put(250,70){\vector(1,0){30}}
\put(290,100){\vector(0,-1){20}}
\put(360,90){\vector(-3,-1){30}}
\put(350,20){\vector(-2,1){20}}
\end{picture}
\caption{ The grid $G_{6,6}$, its neighborhood $Y_{12,12}$, and a $3$-distance dominating set.} \label{figure1}
\end{figure}

In the next section we will give an upper bound on the number of vertices in the set $S$ and show that certain vertices can be removed 
from each corner of the set $S$ and still $k$-distance dominate $G_{m,n}$.

\section{Finding an upper bound for $\left |\phi^{-1}_k\left ( \bar \ell \right ) \cap Y_{m+2k,n+2k} \right |$ } \label{section3}
Let  $p = 2k^2+2k+1$ and  $\phi_k: \ZZ^2 \rightarrow \ZZ_p$ be defined by $(i,j) \mapsto (k+1)i + kj$ as in Section \ref{section2}. 
The following lemma proves that the inverse image $\phi_k^{-1}\left (\bar \ell \right )$ contains exactly one vertex in any $p$ consecutive vertices in any row or column of $\ZZ^2$.  

\begin{lemma}\label{pconsecutive}
Let $\bar{\ell} \in \ZZ_p$.  
Then every $p$ consecutive vertices in any row or column of $G_{m,n}$ will contain exactly one element of $\phi_k^{-1}\left (\bar{\ell} \right )$.  
\end{lemma}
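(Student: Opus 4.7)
The plan is to use the fact that $\phi_k$ is a group (ring) homomorphism together with a simple gcd computation to show that $p$ consecutive entries in any row or column hit every residue modulo $p$ exactly once.

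First I would fix a row $j = b$ and examine any $p$ consecutive vertices $(a, b), (a+1, b), \ldots, (a+p-1, b)$. Their images under $\phi_k$ form the arithmetic progression
\[ \phi_k(a+t, b) = (k+1)a + kb + (k+1)t, \qquad t = 0, 1, \ldots, p-1, \]
in $\ZZ_p$ with common difference $k+1$. The key observation is that
\[ p = 2k^2 + 2k + 1 = 2k(k+1) + 1, \]
which immediately gives $\gcd(k+1, p) = 1$. Hence multiplication by $k+1$ is a bijection on $\ZZ_p$, so the $p$ images above form a complete residue system modulo $p$. In particular, exactly one of these vertices maps to $\bar{\ell}$, i.e. lies in $\phi_k^{-1}(\bar{\ell})$.

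Next I would repeat the argument for any fixed column $i = a$, where the $p$ consecutive vertices $(a, b), (a, b+1), \ldots, (a, b+p-1)$ have images forming an arithmetic progression with common difference $k$. Since $p = 2k(k+1) + 1$ also yields $\gcd(k, p) = 1$, multiplication by $k$ is a bijection on $\ZZ_p$, and again exactly one image equals $\bar{\ell}$.

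There is no real obstacle here: the whole proof reduces to verifying $\gcd(k, p) = \gcd(k+1, p) = 1$, both of which follow from the single identity $p = 2k(k+1)+1$. I would simply present the row case in detail and remark that the column case is identical with $k+1$ replaced by $k$.
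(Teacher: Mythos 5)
Your proof is correct, and it is a cleaner route to the same underlying fact. The paper instead fixes a point $(i,j)\in\phi_k^{-1}(\bar{\ell})$ and argues that the nearest points of the same fiber in its row or column are exactly $(i\pm p,j)$ and $(i,j\pm p)$, ruling out a smaller period $q$ by claiming that $(k+1)q$ (resp.\ $kq$) can be a multiple of $p$ only if it equals $p$, and that $p=2k^2+2k+1$ does not factor as $(k+1)q$ or $kq$. Your version packages the same arithmetic as the single identity $p=2k(k+1)+1$, which gives $\gcd(k,p)=\gcd(k+1,p)=1$ at once, so that the $p$ consecutive images form a complete residue system in $\ZZ_p$. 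This buys you two things: it proves existence and uniqueness simultaneously (the paper's argument starts from a point already known to lie in the fiber, and existence in every window of length $p$ is really the same complete-residue-system fact left implicit), and it avoids the paper's slightly shaky step, since for $0<q<p$ the product $(k+1)q$ could a priori be a multiple of $p$ other than $p$ itself; the correct reason it is not a multiple of $p$ is exactly the coprimality you verify. Your write-up is the one I would keep.
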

\begin{proof}
Recall that  $(i,j)$ is in $\phi_k^{-1}(\bar{\ell})$ for some $\bar{\ell} \in \ZZ_p$ if and only if    \[ \phi_k( (i,j) ) = (k+1)i + kj = \bar{\ell} \in \ZZ_p. \] 
Suppose now that $(i,j) \in \ZZ^2$ is in $\phi_k^{-1}(\bar{\ell})$.  

We will show that the points $(i \pm p,j)$ and $(i,j \pm p)$ are the closest points to $(i,j)$ in $\phi_k^{-1}\left (\bar{\ell} \right )$ contained 
in the same row or column as $(i,j)$. Let $a \in \ZZ$ be any integer.  In the quotient ring $\ZZ_p$ we calculate
\begin{align*} \phi_k( (i + ap,j) ) & = \overline{ (k+1)(i+ap) + kj} \\ &= \overline{[ (k+1)i+kj ] +(k+1)ap } \\ & = \overline{[ \ell ] + (k+1)ap } \\ & = \bar{\ell}  \end{align*}  and
\begin{align*} \phi_k( (i,j+ap) ) & = \overline{(k+1)(i) + k(j+ap)} \\ &= \overline{[ (k+1)i+kj ] +kap} \\ & = \overline{[\ell ] + kap} \\ & = \bar{\ell}  . \end{align*} 
Thus we see that $(i + ap,j)$ and $(i,j + ap)$ are also in $\phi_k^{-1}\left (\bar{\ell} \right )$ for any $a \in \ZZ$.  

Suppose that $0 < q <p$.  We will show that $(i \pm q, j) \notin \phi_k^{-1}\left (\bar{\ell} \right )$.  
 \begin{align*} \phi_k( (i \pm ,j) ) & = \overline{ (k+1)(i \pm q) + kj } \\ &= \overline{[ (k+1)i+kj ] \pm (k+1)q } \\ & = \overline{[ \ell ] \pm (k+1)q }  \end{align*}
 First note that that $0< k+1,q <p$. Hence $(k+1)q$ is a multiple of $p$ if and only if $(k+1)q =p$.  
 Now note that $p= 2k^2+2k+1$ has no real roots.  Thus $p$ can not possibly factor as the product $(k+1)q$ for any $0 < q <p$, and therefore $\overline{[ \ell ] \pm (k+1)q } \neq \bar{\ell}$ in $\ZZ_p$.  

Similarly, we note that $p= 2k^2+2k+1$ can not factor as $kq$ for any $0< q < p$.  Hence we see that 
that $(i, j\pm q) \notin \phi_k^{-1}\left (\bar{\ell} \right )$ for any $0 < q <p$ by computing
\begin{align*} \phi_k( (i,j \pm q ) ) & = \overline{ (k+1)i + k(j \pm q)} \\ &= \overline{[ (k+1)i+kj ] \pm kq } \\ & = \overline{[ \ell ] \pm kq } \neq \bar{\ell} .\end{align*}
This completes our proof that the points $(i \pm p,j)$ and $(i,j \pm p)$ are the closest points to $(i,j)$ in $\phi_k^{-1}\left (\bar{\ell} \right )$ contained 
in the same row or column as $(i,j)$, and thus we conclude that every $p$ consecutive vertices in any row or column $G_{m,n}$ will contain exactly one element from the set $\phi^{-1}_k(\ell)$.  
\end{proof}

Our next result uses Lemma \ref{pconsecutive}  to count the cardinality of the set $\phi_k^{-1}\left (\bar{\ell} \right ) \cap G_{m,n}$ for any $\bar \ell \in \ZZ_p$
when either $m$ or $n$ is a multiple of $p$. 

\begin{lemma}\label{multipleofp}
 If either $m$ or $n$ is a multiple of $p$, then 
 for any $\bar{\ell} \in \ZZ_p$ the cardinality of the set $\phi_k^{-1}\left(\bar{\ell} \right ) \cap G_{m,n}$ is 
 \[ | \phi_k^{-1}\left (\bar{\ell} \right ) \cap G_{m,n}|  = \frac{mn}{p}. \]
\end{lemma}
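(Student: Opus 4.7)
The plan is to reduce directly to Lemma \ref{pconsecutive} by slicing the grid into rows (or columns) of length a multiple of $p$. Without loss of generality, assume $n$ is a multiple of $p$; the argument when $m$ is a multiple of $p$ is symmetric, swapping the roles of rows and columns.

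First I would fix any $\bar{\ell} \in \ZZ_p$ and, for each $i$ with $0 \leq i \leq m-1$, examine the $i$-th row of $G_{m,n}$, namely the set of vertices $R_i = \{(i,j) : 0 \leq j \leq n-1\}$. Since $n$ is a multiple of $p$, I can partition $R_i$ into $n/p$ disjoint blocks of $p$ consecutive vertices each. By Lemma \ref{pconsecutive}, each such block of $p$ consecutive vertices in a row contains exactly one element of $\phi_k^{-1}(\bar{\ell})$, so
\[ |\phi_k^{-1}(\bar{\ell}) \cap R_i| = \frac{n}{p}. \]

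Summing over the $m$ rows of $G_{m,n}$ then gives
\[ |\phi_k^{-1}(\bar{\ell}) \cap G_{m,n}| = \sum_{i=0}^{m-1} |\phi_k^{-1}(\bar{\ell}) \cap R_i| = m \cdot \frac{n}{p} = \frac{mn}{p}, \]
as claimed. The case where $m$ is a multiple of $p$ is handled identically by partitioning each column of length $m$ into $m/p$ blocks of $p$ consecutive vertices and applying the column version of Lemma \ref{pconsecutive}.

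The argument is essentially a one-step bookkeeping application of the previous lemma, so there is no real obstacle. The only thing to be careful about is invoking the right version (row vs.\ column) of Lemma \ref{pconsecutive} in each case, and making sure the partition of the row or column into blocks of length $p$ genuinely tiles the row or column, which is guaranteed by the divisibility hypothesis.
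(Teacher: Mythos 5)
Your proposal is correct and follows essentially the same route as the paper: both arguments fix one direction whose length is divisible by $p$, use Lemma \ref{pconsecutive} to count exactly $n/p$ (resp.\ $m/p$) elements of $\phi_k^{-1}\left(\bar{\ell}\right)$ in each row (resp.\ column), and multiply by the number of rows (resp.\ columns). Your version just makes the tiling into blocks of $p$ consecutive vertices slightly more explicit than the paper does.
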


\begin{proof}
By Lemma \ref{pconsecutive}, we know for every $\bar{\ell} \in \ZZ_p$ that
every $p$ consecutive vertices in any row or column of $G_{m,n}$
will contain exactly one element of $\phi_k^{-1}\left (\bar{\ell} \right )$.  
If $m = ap$ then every row of $G_{m,n}$ will have exactly $a$ vertices from $\phi_k^{-1} \left ( \bar \ell \right )$ in it. 
Similarly, if $n= bp$ then every column of $G_{m,n}$ has $b$ vertices from $\phi_k^{-1} \left ( \bar \ell \right )$ in it. 
 Hence $ | \phi_k^{-1} \left (\bar{\ell} \right ) \cap G_{m,n}|  = \frac{mn}{p} $ if either $m$ or $n$ is a multiple of $p$. 
\end{proof}

When neither $m$ nor $n$ is a multiple of $p$, it is considerably harder to count the elements in the set 
$\phi_k^{-1}\left (\bar{\ell} \right ) \cap G_{m,n}$ for a particular $\bar \ell \in \ZZ_p$.  
However, our next result proves that there is at least one $\bar \ell \in \ZZ_p$ for which the cardinality of this set is bounded above by $\left \lfloor \frac{mn}{p} \right \rfloor$. 

\begin{proposition} \label{bound}
 If neither $m$ nor $n$ is a multiple of $p$, then there exists an $\bar{\ell} \in \ZZ_p$ such that the cardinality of the set $\phi_k^{-1}\left (\bar{\ell} \right ) \cap G_{m,n}$ satisfies 
 \[ | \phi_k^{-1}\left (\bar{\ell} \right ) \cap G_{m,n} | \leq \left \lfloor \frac{mn}{p} \right \rfloor. \]
\end{proposition}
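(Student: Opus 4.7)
The plan is to apply a straightforward averaging (pigeonhole) argument to the partition of $\ZZ^2$ given by the fibers of $\phi_k$. Since $\phi_k : \ZZ^2 \to \ZZ_p$ is a function, the preimages $\phi_k^{-1}(\bar{\ell})$ as $\bar{\ell}$ ranges over $\ZZ_p$ partition $\ZZ^2$ into $p$ classes. Intersecting this partition with $G_{m,n}$ yields $p$ pairwise disjoint subsets whose union is $G_{m,n}$, so
\[ \sum_{\bar{\ell}\in\ZZ_p} \left| \phi_k^{-1}(\bar{\ell}) \cap G_{m,n} \right| \;=\; |G_{m,n}| \;=\; mn. \]

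I would then argue by contradiction. Suppose that $|\phi_k^{-1}(\bar{\ell}) \cap G_{m,n}| \geq \lfloor mn/p \rfloor + 1$ for every $\bar{\ell} \in \ZZ_p$. Writing $mn = p \cdot \lfloor mn/p \rfloor + r$ with $0 \leq r < p$, the displayed sum would then be at least $p(\lfloor mn/p \rfloor + 1) = mn - r + p > mn$, a contradiction. Hence some fiber has size at most $\lfloor mn/p \rfloor$, which provides the required $\bar{\ell}$.

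I do not expect any serious obstacle. The hypothesis that neither $m$ nor $n$ is a multiple of $p$ is not actually exploited by the averaging itself; its role is merely to separate the statement from Lemma \ref{multipleofp}, which gives the exact value $mn/p$ in the divisible case. The appearance of the floor in the bound simply reflects the fact that fiber sizes must be integers, and no use is made here of the ring-homomorphism structure of $\phi_k$ beyond the trivial observation that its level sets partition the domain.
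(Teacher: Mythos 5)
Your proof is correct, and it takes a genuinely different (and simpler) route than the paper's. The paper also argues by contradiction from the same hypothesis, that every fiber meets $G_{m,n}$ in at least $\left\lfloor \frac{mn}{p} \right\rfloor + 1$ points, but it derives the contradiction by blowing up: it tiles the large grid $G_{mp,np}$ by $p^2$ translated copies of $G_{m,n}$, invokes Lemma \ref{multipleofp} to get the exact count $mnp$ for a single fiber in $G_{mp,np}$, and then overcounts that one fiber across the $p^2$ tiles. That argument quietly relies on translation-equivariance of the fibers (a translate of $G_{m,n}$ meets $\phi_k^{-1}(\bar{\ell})$ in as many points as $G_{m,n}$ meets $\phi_k^{-1}(\bar{\ell}')$ for a shifted class $\bar{\ell}'$, which is why the hypothesis must be assumed for all $\bar{\ell}$), and its displayed computation even contains a small slip, since $p^2\left\lfloor \frac{mn}{p}\right\rfloor$ equals $p(mn-r)$ rather than $mnp$ when $p \nmid mn$, though the strict inequality survives. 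Your version instead sums the fiber sizes over all $\bar{\ell} \in \ZZ_p$ for the fixed grid, uses only that the level sets partition $G_{m,n}$, and needs neither Lemma \ref{multipleofp} nor the equivariance; as you observe, it even shows the hypothesis that $p$ divides neither $m$ nor $n$ is unnecessary, since when $p \mid mn$ the average fiber size is exactly $\frac{mn}{p} = \left\lfloor \frac{mn}{p}\right\rfloor$ and some fiber is at most the average. The only cost is cosmetic: the paper's blow-up recycles Lemma \ref{multipleofp} and fits the narrative of the preceding section, while yours is self-contained.
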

\begin{proof}
To prove our claim, we will suppose that for some $1 \leq n \leq m < p$ and for all $\bar \ell \in \ZZ_p$ 
that $| \phi_k^{-1}\left (\bar{\ell} \right ) \cap G_{m,n} | > \left \lfloor \frac{mn}{p} \right \rfloor$ and derive a contradiction.
Note that this is equivalent to  assuming that \begin{equation} | \phi_k^{-1}\left (\bar{\ell} \right ) \cap G_{m,n} | \geq \left \lfloor \frac{mn}{p} \right \rfloor + 1 
\label{absurd} \end{equation}  for all $\bar{\ell} \in \ZZ_p$.
 
Now we consider the $mp$ by $np$ grid $G_{mp,np}$.  By Lemma \ref{multipleofp} we know that for any $\bar{\ell} \in \ZZ_p$ we have
$|\phi_k^{-1}\left (\bar{\ell} \right ) \cap G_{mp,np}| = mnp$.  We can also partition $G_{mp,np}$ into $p^2$ many copies of $G_{m,n}$.  
Supposing that Equation (\ref{absurd}) is true for all $\bar{\ell} \in \ZZ_p$, we derive the following absurdity
\begin{align*}  \left | \phi_k^{-1}\left (\bar{\ell} \right ) \cap G_{mp,np} \right |  &  \geq p^2 \left ( 
\left \lfloor \frac{mn}{p} \right \rfloor + 1 \right)\\ & = \left \lfloor mnp \right \rfloor + p^2 \\ &  = mnp+p^2\\ & > mnp = \left | \phi_k^{-1}\left (\bar{\ell} \right ) \cap G_{mp,np} \right |. \end{align*} 
This proves that Equation (\ref{absurd}) cannot be true for every $\bar{\ell} \in \ZZ_p$.  Hence we conclude that there exists an $\bar{\ell} \in \ZZ_p$ such that the cardinality of the set $\phi_k^{-1}\left (\bar{\ell} \right ) \cap G_{m,n}$ satisfies 
 $ | \phi_k^{-1}\left (\bar{\ell} \right ) \cap G_{m,n} | \leq \left \lfloor \frac{mn}{p} \right \rfloor $ as desired.
\end{proof}

\begin{corollary} \label{boundY}
  For any $m$ and $n$ there exists an $\bar{\ell} \in \ZZ_p$ such that the cardinality of the set $\phi_k^{-1}\left (\bar{\ell} \right ) \cap Y_{m+2k,n+2k}$ satisfies 
\[ \left | \phi_k^{-1}\left (\bar{\ell} \right ) \cap Y_{m+2k,n+2k} \right  | \leq \left \lfloor \frac{(m+2k)(n+2k)}{p} \right \rfloor. \]
\end{corollary}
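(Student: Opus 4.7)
The plan is to reduce Corollary \ref{boundY} to Lemma \ref{multipleofp} and Proposition \ref{bound} by observing that $Y_{m+2k,n+2k}$ is simply a translate of the grid $G_{m+2k,n+2k}$ inside $\ZZ^2$, and that the homomorphism $\phi_k$ interacts very cleanly with translation.

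First, I would set $M=m+2k$ and $N=n+2k$ and note that
\[ Y_{m+2k,n+2k} = G_{M,N} + (-k,-k), \]
where $+$ denotes coordinatewise translation. Applying either Lemma \ref{multipleofp} (when $M$ or $N$ is a multiple of $p$) or Proposition \ref{bound} (when neither is) to $G_{M,N}$ produces an $\bar{\ell}' \in \ZZ_p$ with
\[ \bigl| \phi_k^{-1}(\bar{\ell}') \cap G_{M,N} \bigr| \leq \left\lfloor \frac{MN}{p} \right\rfloor = \left\lfloor \frac{(m+2k)(n+2k)}{p} \right\rfloor. \]

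Next I would exploit the fact that $\phi_k$ is a group homomorphism, so translation only shifts its image. For any $(i,j) \in \ZZ^2$,
\[ \phi_k(i-k,\, j-k) = (k+1)(i-k) + k(j-k) = \phi_k(i,j) - k(k+1) - k^2 = \phi_k(i,j) - (2k^2+k). \]
Thus the map $(i,j) \mapsto (i-k,j-k)$ gives a bijection between $\phi_k^{-1}(\bar{\ell}') \cap G_{M,N}$ and $\phi_k^{-1}(\bar{\ell}' - \overline{2k^2+k}) \cap Y_{m+2k,n+2k}$.

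Setting $\bar{\ell} = \bar{\ell}' - \overline{2k^2+k} \in \ZZ_p$, this bijection immediately yields
\[ \bigl| \phi_k^{-1}(\bar{\ell}) \cap Y_{m+2k,n+2k} \bigr| = \bigl| \phi_k^{-1}(\bar{\ell}') \cap G_{M,N} \bigr| \leq \left\lfloor \frac{(m+2k)(n+2k)}{p} \right\rfloor, \]
which is the desired conclusion. There is no serious obstacle here; the only step that requires any care is verifying the translation identity for $\phi_k$ so that the counting argument transfers cleanly from the grid $G_{M,N}$ (to which the earlier results apply directly) to the shifted neighborhood $Y_{m+2k,n+2k}$.
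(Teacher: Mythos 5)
Your proof is correct and follows essentially the same route as the paper: both reduce the claim to Lemma \ref{multipleofp} and Proposition \ref{bound} by identifying $Y_{m+2k,n+2k}$ with the grid $G_{m+2k,n+2k}$. The only difference is that you make explicit the translation identity $\phi_k(i-k,j-k)=\phi_k(i,j)-\overline{2k^2+k}$, which justifies the transfer of the count to a (possibly different) residue class $\bar{\ell}$ --- a detail the paper compresses into the phrase ``isomorphic by its definition.''
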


\begin{proof}
Note that the neighborhood $Y_{m+2k,n+2k}$ is isomorphic to the grid $G_{m+2k,n+2k}$ by its definition.  Hence we can apply Lemma \ref{multipleofp} to deduce that 
\[ \left | \phi_k^{-1}\left (\bar{\ell} \right ) \cap Y_{m+2k,n+2k} \right  | =  \frac{(m+2k)(n+2k)}{p} =  \left \lfloor \frac{(m+2k)(n+2k)}{p} \right \rfloor \]
for all $\bar \ell \in \ZZ_p$ when either $m+2k$ or $n+2k$ is a multiple of $p$.
When neither $m+2k$ nor $n+2k$ is a multiple of $p$ we can apply
Proposition \ref{bound}  to conclude that there exists an $\bar \ell \in \ZZ_p$ such that $ \left | \phi_k^{-1}\left (\bar{\ell} \right ) \cap Y_{m+2k,n+2k} \right  | \leq \left \lfloor \frac{(m+2k)(n+2k)}{p} \right \rfloor$
otherwise.  
\end{proof}

Note that since $\phi_k^{-1} \left ( \bar \ell \right )  \cap Y_{m+2k,n+2k}$ is a $k$-distance dominating set for $G_{m,n}$, Corollary \ref{boundY} 
proves that $\gamma_k(G_{m,n}) \leq \left \lfloor \frac{(m+2k)(n+2k)}{p} \right \rfloor$.

\section{Main Result} \label{section4}
In the last section, we proved that $\gamma_k(G_{m,n}) \leq \left \lfloor \frac{(m+2k)(n+2k)}{p} \right \rfloor$. This bound already improves on any previously known result!
In this section, we describe three techniques which allow us to remove at least one vertex from each corner of 
$\phi_k^{-1} \left ( \bar \ell \right )  \cap Y_{m+2k,n+2k}$
to obtain a set that still dominates $G_{m,n}$. As a result, we prove that $\gamma_k(G_{m,n}) \leq \left \lfloor \frac{(m+2k)(n+2k)}{p} \right \rfloor - 4$.

\begin{lemma} \label{remove}
Suppose that $m$ and $n$ are both greater than $2p$.  
Then an element can be removed from each corner of $\phi_k^{-1}(\bar{\ell}) \cap Y_{m+2k,n+2k}$ and the resulting set still dominates $G_{m,n}$.
\end{lemma}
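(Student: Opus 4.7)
The plan is to exploit the perfect $k$-tiling structure of $\phi_k^{-1}(\bar\ell)$ together with the hypothesis $m, n > 2p$ (which keeps the four corners of $Y_{m+2k,n+2k}$ far enough apart that modifications near one corner do not interfere with those at another). Using this, it suffices to exhibit, near each corner, a single vertex of $\phi_k^{-1}(\bar\ell) \cap Y_{m+2k,n+2k}$ whose removal preserves the $k$-distance dominating property on $G_{m,n}$.

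The key structural observation is that $\phi_k$ restricted to the $k$-diamond $B_k := \{(a,b) \in \ZZ^2 : |a|+|b| \le k\}$ is a bijection onto $\ZZ_p$: surjectivity follows because $\phi_k^{-1}(\bar\ell)$ is a $k$-distance dominating set of $\ZZ^2$, and then $|B_k| = p$ forces injectivity. Hence every vertex of $\ZZ^2$ has a \emph{unique} dominator in $\phi_k^{-1}(\bar\ell)$, so removing $v \in \phi_k^{-1}(\bar\ell) \cap Y_{m+2k,n+2k}$ orphans exactly the vertices of $\left(v + B_k\right) \cap G_{m,n}$. In particular, $v$ is directly removable if and only if $d_{L^1}(v, G_{m,n}) > k$.

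For the bottom-left corner, the set of directly removable vertices is the triangular \emph{safe region}
\[ \mathcal{T} = \{(a, b) \in \ZZ^2 : -k \le a, b \le -1,\; a + b \le -k - 1 \}, \]
consisting of $\tfrac{k(k+1)}{2}$ lattice points. The first technique covers the case $\phi_k^{-1}(\bar\ell) \cap \mathcal{T} \ne \emptyset$: delete any such vertex. When this intersection is empty, I would identify the unique dominator $v^* \in \phi_k^{-1}(\bar\ell)$ of the corner point $(-k, -k)$, whose $k$-diamond spills slightly into $G_{m,n}$ near the grid corner $(0,0)$; the remaining two techniques treat the subcases determined by the position of $v^*$ inside $B_k + (-k,-k)$. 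Roughly, one technique deletes $v^*$ and reroutes some nearby $\phi_k^{-1}(\bar\ell)$-vertex of $Y_{m+2k,n+2k} \setminus G_{m,n}$ to an interior vertex that simultaneously dominates both its own tile's overlap with $G_{m,n}$ and the orphans of $v^*$; the other technique exhibits two vertices of $\phi_k^{-1}(\bar\ell) \cap (Y_{m+2k,n+2k} \setminus G_{m,n})$ whose move-to-nearest-interior-neighbor destinations coincide, producing a duplicate in the final dominating set and hence a net reduction of one. Analogous safe regions and three-technique analyses apply at each of the other three corners.

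The main obstacle is verifying that the three techniques collectively exhaust all configurations. Since $\bar\ell$ determines the position of $v^*$ inside $B_k+(-k,-k)$ via the explicit formula $\phi_k(a,b) = (k+1)a + kb$, and $B_k$ contains only $p$ lattice points, there are at most $p$ cases per corner; each reduces to a direct arithmetic check. Once each corner yields one removable vertex, combining the four removals, which do not interact thanks to the $m, n > 2p$ hypothesis, produces a $k$-distance dominating set of $G_{m,n}$ of cardinality at most $\left\lfloor \frac{(m+2k)(n+2k)}{p} \right\rfloor - 4$, as required.
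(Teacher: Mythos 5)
There is a genuine gap. Your structural setup is sound and in fact sharper than the paper's: the observation that $\phi_k$ restricted to the diamond $B_k=\{(a,b): |a|+|b|\le k\}$ is a bijection onto $\ZZ_p$ (so $\phi_k^{-1}(\bar\ell)$ is a perfect code and every vertex has a unique dominator) is correct, and your ``safe region'' case --- delete any dominating vertex whose diamond misses $G_{m,n}$ entirely --- matches the paper's Case~1. But the entire difficulty of the lemma lies in the complementary situation, where every dominating vertex near the corner has its diamond meeting $G_{m,n}$, and there you offer only an unverified sketch: ``one technique deletes $v^*$ and reroutes some nearby vertex to an interior vertex that simultaneously dominates both its own tile's overlap with $G_{m,n}$ and the orphans of $v^*$.'' That single-vertex rerouting cannot work in general: by the perfect-code property the orphaned set $(v^*+B_k)\cap G_{m,n}$ is disjoint from the tile of any other dominating vertex, and the union of that orphaned region with another vertex's overlap region typically has diameter exceeding $2k$, so no single relocated vertex covers both. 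The paper resolves exactly this by shifting an entire diagonal family of dominating vertices (all those northwest of the line $L_1$, or all those on $L_1$) east by one unit so that the orphaned boundary vertex $u$ is absorbed by the shifted neighbor $t$, and then repairing the secondary hole this creates with a further one-unit shift of $z$ (Case~2) or of all vertices strictly northwest of $L_1$ (Case~3). You have no analogue of this collective shift, and you explicitly flag the exhaustiveness of your three techniques as ``the main obstacle'' without closing it.

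Your fallback --- that there are ``at most $p$ cases per corner, each a direct arithmetic check'' --- also does not rescue the argument for the stated lemma, because $p=2k^2+2k+1$ grows with $k$: a finite enumeration per fixed $k$ is not a proof for all $k$. What is needed is a uniform construction parametrized by the position of the corner dominating vertex, which is precisely what the paper's trichotomy on the slopes of $L_1$ and $L_2$ supplies. Similarly, your ``two vertices snap to the same interior destination'' technique would need a proof that the collapsed set still dominates, which is not automatic and is not supplied. To complete your proof you would need to replace the two vague techniques with explicit shift operations and verify, as the paper does, that after the shifts every previously covered vertex of $G_{m,n}$ remains within distance $k$ of the modified set.
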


\begin{proof}
We will now describe how to remove at least one vertex from the northwest corner of $\phi_k^{-1}(\bar{\ell}) \cap Y_{m+2k,n+2k}$.   
For a fixed $\bar \ell \in \ZZ_p$, the other three corners of the dominating set $\phi^{-1}_k \left( \bar \ell \right) \cap Y_{m+2k,n+2k}$ are 
all either rotations or mirror images of the northwest corner
of $\phi^{-1}_k \left( \bar \ell' \right) \cap Y_{m+2k,n+2k}$ for some $\bar \ell' \in \ZZ_p$. Hence they are all isomorphic to one of the cases considered below, and thus we 
can remove a vertex from each of them as well.  
(We assume that $m$ and $n$ are both greater than $2p$ so that we can remove one vertex from each corner, and none of the local shifts effect the other three corners.)

We start by introducing the following notation: 
 We let the westernmost element in $\phi_k^{-1}(\bar{\ell}) \cap Y_{m+2k,n+2k}$ on the northern boundary of $Y_{m+2k,n+2k}$ be denoted $s$. 
 We let the northernmost element in $\phi_k^{-1}(\bar{\ell}) \cap Y_{m+2k,n+2k}$ that is one column to the west of the western boundary of $G_{m,n}$ be called $z$. 
Finally, we label the line through $s$ and $z$ by $L_1$ and the line through $s$ with slope $k/(k+1)$ by $L_2$.

Our techniques for removing a vertex from the northwest corner of $\phi_k^{-1}(\bar{\ell}) \cap Y_{m+2k,n+2k}$ depend on the slopes of $L_1$ and $L_2$, and they break down into three cases:
Either the slope of $L_1$ is negative, the slope of $L_1$ is greater than the slope of $L_2$, or the slope of $L_1$ is positive but less than or equal to the slope $L_2$.

\begin{figure}[H]
\centering
\begin{minipage}[t]{.45\textwidth}
\centering

\begin{picture}(210,230)(80,-10)
\linethickness{0.4mm}
         \put(110,200){\color{green}{\line(1,0){200}}}
         \put(110,0){\color{green}{\line(0,1){200}}}
         
         \put(140,170){\color{red}{\line(1,0){170}}}
         \put(140,0){\color{red}{\line(0,1){170}}}
         
         \put(110,200){\color{black}{\line(1,-6){18}}}

\linethickness{.1 mm}
	\multiput(110,10)(0,10){19}{\line(1,0){200}}
	\multiput(120,0)(10,0){19}{\line(0,1){200}}

        \put(120,20){\color{blue}{\circle*{5}}}

        \put(130,90){\color{blue}{\circle*{5}}}
        \put(160,50){\color{blue}{\circle*{5}}}
        \put(190,10){\color{blue}{\circle*{5}}}
        
        \put(110,200){\color{red}{\circle*{5}}}
        \put(140,160){\color{blue}{\circle*{5}}}
        \put(170,120){\color{blue}{\circle*{5}}}
        \put(200,80){\color{blue}{\circle*{5}}}
        \put(230,40){\color{blue}{\circle*{5}}}
        \put(260,0){\color{blue}{\circle*{5}}}
        
        \put(180,190){\color{blue}{\circle*{5}}}
        \put(210,150){\color{blue}{\circle*{5}}}
        \put(240,110){\color{blue}{\circle*{5}}}
        \put(270,70){\color{blue}{\circle*{5}}}
        \put(300,30){\color{blue}{\circle*{5}}}

        \put(250,180){\color{blue}{\circle*{5}}}
        \put(280,140){\color{blue}{\circle*{5}}}
        \put(310,100){\color{blue}{\circle*{5}}}
        
        \put(119,145){$L_1$}
        \put(115,200){$s$}
        \put(110,170){\color{red}{\line(1,1){30}}}
         \put(110,170){\color{red}{\line(-1,1){30}}}
         \put(110,230){\color{red}{\line(-1,-1){30}}}
         \put(110,230){\color{red}{\line(1,-1){30}}}
	    \put(120,90){$z$}
        \put(130,60){\color{red}{\line(1,1){30}}}
        \put(130,60){\color{red}{\line(-1,1){30}}}
        \put(130,120){\color{red}{\line(-1,-1){30}}}
        \put(130,120){\color{red}{\line(1,-1){30}}}
\end{picture}

\caption{Case 1 before shifts} \label{case11}
\end{minipage}\hfill
\begin{minipage}[t]{.45\textwidth}
\centering

\begin{picture}(210,230)(100,-10)
\linethickness{0.4mm}
         \put(110,200){\color{green}{\line(1,0){200}}}
         \put(110,0){\color{green}{\line(0,1){200}}}
         \put(140,170){\color{red}{\line(1,0){170}}}
         \put(140,0){\color{red}{\line(0,1){170}}}

\linethickness{.1 mm}
	\multiput(110,10)(0,10){19}{\line(1,0){200}}
	\multiput(120,0)(10,0){19}{\line(0,1){200}}

        \put(140,20){\color{red}{\circle*{5}}}

        \put(140,90){\color{red}{\circle*{5}}}
        \put(160,50){\color{blue}{\circle*{5}}}
        \put(190,10){\color{blue}{\circle*{5}}}
        
        \put(140,160){\color{blue}{\circle*{5}}}
        \put(170,120){\color{blue}{\circle*{5}}}
        \put(200,80){\color{blue}{\circle*{5}}}
        \put(230,40){\color{blue}{\circle*{5}}}
        \put(260,0){\color{blue}{\circle*{5}}}
        
        \put(180,170){\color{red}{\circle*{5}}}
        \put(210,150){\color{blue}{\circle*{5}}}
        \put(240,110){\color{blue}{\circle*{5}}}
        \put(270,70){\color{blue}{\circle*{5}}}
        \put(300,30){\color{blue}{\circle*{5}}}

        \put(250,170){\color{red}{\circle*{5}}}
        \put(280,140){\color{blue}{\circle*{5}}}
        \put(310,100){\color{blue}{\circle*{5}}}        
\end{picture}

\caption{Case 1 after the shifts} \label{case12}
\end{minipage}
\end{figure}
\textbf{Case 1:}
If the slope of $L_1$ is negative as depicted in Figures \ref{case11} and \ref{case12}, then the $k$-distance neighborhood of $s$ does not intersect $G_{m,n}$. 
Hence, $s$ can be removed from $\phi_k^{-1}(\bar{\ell}) \cap Y_{m+2k,n+2k}$ and the resulting set still dominates $G_{m,n}$.  
To obtain a dominating set of $G_{m,n}$ that is contained entirely in $G_{m,n}$, move each element of $\phi_k^{-1}(\bar{\ell}) \cap (Y_{m+2k,n+2k} - G_{m,n})$ to its nearest neighbor 
in $G_{m,n}$.

\textbf{Case 2:} 
 If the slope of $L_1$ is greater than the slope of $L_2$, then  shift all of the elements northwest of $L_1$ to the east one unit so that we can remove $s$.  
As depicted in Figure \ref{case21}, let the southernmost vertex in the $k$-distance neighborhood of $s$ be denoted $u$.  (It lies on the northern boundary of $G_{m,n}$ and is due south of $s$.)
Let the vertex at the intersection of the northern boundary of $G_{m,n}$ and $L_2$ be denoted $t$.  (It lies $k+1$ vertices to the west of $u$.)

Note that after shifting all of the elements northwest of $L_1$ to the east one unit, the $k$ distance neighborhood of $t$ will contain $u$.  
Hence $s$ can be removed from our dominating set. 
The previous shift leaves the vertex $b$ on the western boundary of $G_{m,n}$ undominated.  Note that the vertex $b$ is $k+1$ vertices north of $z$,
so we can shift the vertex $z$ up one unit, and the $k$-distance neighborhood of $z$ will contain $b$ and all of the vertices 
that $z$ originally dominated before these two shifts. (The original domination neighborhood of $z$ is highlighted by circles in Figure \ref{case22}.)  
Finally, we move every vertex in this dominating set that lies outside $G_{m,n}$ to its nearest neighbor inside $G_{m,n}$ to obtain a dominating set that is contained inside of $G_{m,n}$.  
\begin{figure}[H]
\centering
\begin{minipage}[t]{.45\textwidth}
\centering

\begin{picture}(210,240)(100,-25)

\linethickness{0.4mm}
         \put(110,200){\color{green}{\line(1,0){230}}}
         \put(110,-30){\color{green}{\line(0,1){230}}}
         
         \put(230,200){\color{blue}{\line(-4,-3){120}}}
         \put(230,200){\color{black}{\line(-1,-2){100}}}
         
         \put(140,170){\color{red}{\line(1,0){200}}}
         \put(140,-30){\color{red}{\line(0,1){200}}}

\linethickness{.1 mm}
	\multiput(110,-20)(0,10){22}{\line(1,0){230}}
	\multiput(120,-30)(10,0){22}{\line(0,1){230}}
	
        \put(140,70){\color{blue}{\circle*{5}}}
        \put(140,40){\color{red}{\line(1,1){30}}}
        \put(140,40){\color{red}{\line(-1,1){30}}}
        \put(140,100){\color{red}{\line(-1,-1){30}}}
        \put(140,100){\color{red}{\line(1,-1){30}}}
        
        \put(180,100){\color{blue}{\circle*{5}}}
	    \put(180,70){\color{red}{\line(1,1){30}}}
        \put(180,70){\color{red}{\line(-1,1){30}}}
        \put(180,130){\color{red}{\line(-1,-1){30}}}
        \put(180,130){\color{red}{\line(1,-1){30}}}
        
        \put(220,130){\color{blue}{\circle*{5}}}
        \put(260,160){\color{blue}{\circle*{5}}}
        \put(300,190){\color{blue}{\circle*{5}}}
        
        \put(140,20){\color{red}{\circle{5}}}
        \put(140,10){\color{red}{\circle{5}}}
        \put(140,0){\color{red}{\circle{5}}}
        \put(140,-10){\color{red}{\circle{5}}}
        \put(140,-20){\color{red}{\circle{5}}}
        \put(150,10){\color{red}{\circle{5}}}
        \put(150,0){\color{red}{\circle{5}}}
        \put(150,-10){\color{red}{\circle{5}}}
        \put(160,0){\color{red}{\circle{5}}}

        \put(110,110){\color{blue}{\circle*{5}}}
        \put(110,80){\color{red}{\line(1,1){30}}}
        \put(110,80){\color{red}{\line(-1,1){30}}}
        \put(110,140){\color{red}{\line(-1,-1){30}}}
        \put(110,140){\color{red}{\line(1,-1){30}}}

        \put(150,140){\color{blue}{\circle*{5}}}
        \put(150,110){\color{red}{\line(1,1){30}}}
        \put(150,110){\color{red}{\line(-1,1){30}}}
        \put(150,170){\color{red}{\line(-1,-1){30}}}
        \put(150,170){\color{red}{\line(1,-1){30}}}
        
        \put(190,170){\color{blue}{\circle*{5}}}
        \put(230,200){\color{red}{\circle*{5}}}
        \put(183,171){$t$}

        \put(130,0){\color{blue}{\circle*{5}}}
        \put(170,30){\color{blue}{\circle*{5}}}
        \put(210,60){\color{blue}{\circle*{5}}}
        \put(250,90){\color{blue}{\circle*{5}}}
        \put(290,120){\color{blue}{\circle*{5}}}

        \put(120,180){\color{blue}{\circle*{5}}}
	    \put(120,150){\color{red}{\line(1,1){30}}}
        \put(120,150){\color{red}{\line(-1,1){30}}}
        \put(120,210){\color{red}{\line(-1,-1){30}}}
        \put(120,210){\color{red}{\line(1,-1){30}}}
        
        \put(330,150){\color{blue}{\circle*{5}}}
        \put(320,80){\color{blue}{\circle*{5}}}
        
        \put(240,20){\color{blue}{\circle*{5}}}
        \put(280,50){\color{blue}{\circle*{5}}}
        \put(310,10){\color{blue}{\circle*{5}}}
        
        \put(110,110){\vector(1,0){10}}
    	\put(150,140){\vector(1,0){10}}
	    \put(190,170){\vector(1,0){10}}
	    \put(190,140){\color{red}{\line(1,1){30}}}
        \put(190,140){\color{red}{\line(-1,1){30}}}
        \put(190,200){\color{red}{\line(-1,-1){30}}}
        \put(190,200){\color{red}{\line(1,-1){30}}}
	    
	    \put(200,-10){\color{blue}{\circle*{5}}}
	    \put(270,-20){\color{blue}{\circle*{5}}}
	    
	    \put(180,100){\vector(1,0){10}}
    	\put(120,180){\vector(1,0){10}}
    	\put(130,0){\vector(1,0){10}}
    	\put(140,0){\vector(0,1){10}}
    	\put(140,70){\vector(1,0){10}}
    	

	    \put(140,40){\color{black}{\circle*{5}}}
	    \put(133,41){$b$}
	    
	    \put(121,1){$z$}
	    \put(130,-30){\color{red}{\line(1,1){30}}}
        \put(130,-30){\color{red}{\line(-1,1){30}}}
        \put(130,30){\color{red}{\line(-1,-1){30}}}
        \put(130,30){\color{red}{\line(1,-1){30}}}
        
        \put(201,134){$L_1$}
        \put(104,117){$L_2$}
        
	    \put(233,201){$s$}
	    \put(230,170){\color{red}{\line(1,1){30}}}
        \put(230,170){\color{red}{\line(-1,1){30}}}
        \put(230,230){\color{red}{\line(-1,-1){30}}}
        \put(230,230){\color{red}{\line(1,-1){30}}}
        
	    \put(233,171){$u$}
        \put(230,170){\color{black}{\circle*{5}}}
        
\end{picture}

\caption{Case 2 before shifts} \label{case21}
\end{minipage}\hfill
\begin{minipage}[t]{.45\textwidth}
\centering

\begin{picture}(210,240)(100,-25)

\linethickness{0.4mm}
        \put(110,200){\color{green}{\line(1,0){230}}}
        \put(110,-30){\color{green}{\line(0,1){230}}}

         \put(140,170){\color{red}{\line(1,0){200}}}
         \put(140,-30){\color{red}{\line(0,1){200}}}

\linethickness{.1 mm}
	\multiput(110,-20)(0,10){22}{\line(1,0){230}}
	\multiput(120,-30)(10,0){22}{\line(0,1){230}}
	
        \put(150,70){\color{blue}{\circle*{5}}}
        \put(150,40){\color{red}{\line(1,1){30}}}
        \put(150,40){\color{red}{\line(-1,1){30}}}
        \put(150,100){\color{red}{\line(-1,-1){30}}}
        \put(150,100){\color{red}{\line(1,-1){30}}}
        
        \put(190,100){\color{blue}{\circle*{5}}}
         \put(190,70){\color{red}{\line(1,1){30}}}
        \put(190,70){\color{red}{\line(-1,1){30}}}
        \put(190,130){\color{red}{\line(-1,-1){30}}}
        \put(190,130){\color{red}{\line(1,-1){30}}}
        
	    \put(233,171){$u$}
        \put(230,170){\color{black}{\circle*{5}}}	    
        
        \put(220,130){\color{blue}{\circle*{5}}}
        \put(260,160){\color{blue}{\circle*{5}}}
    
    	\put(140,40){\color{black}{\circle*{5}}}
	    \put(133,41){$b$}
        
        \put(193,171){$t$}

        \put(140,80){\color{red}{\line(1,1){30}}}
        \put(140,80){\color{red}{\line(-1,1){30}}}
        \put(140,140){\color{red}{\line(-1,-1){30}}}
        \put(140,140){\color{red}{\line(1,-1){30}}}

	    \put(131,11){$z$}

        \put(170,30){\color{blue}{\circle*{5}}}
        \put(210,60){\color{blue}{\circle*{5}}}
        \put(250,90){\color{blue}{\circle*{5}}}
        \put(290,120){\color{blue}{\circle*{5}}}

        \put(330,150){\color{blue}{\circle*{5}}}
        \put(320,80){\color{blue}{\circle*{5}}}
        
        \put(200,-10){\color{blue}{\circle*{5}}}
	    \put(270,-20){\color{blue}{\circle*{5}}}
        
        \put(240,20){\color{blue}{\circle*{5}}}
        \put(280,50){\color{blue}{\circle*{5}}}
        \put(310,10){\color{blue}{\circle*{5}}}

        \put(220,130){\color{blue}{\circle*{5}}}
        \put(260,160){\color{blue}{\circle*{5}}}
        
        \put(300,170){\color{red}{\circle*{5}}}

        \put(140,110){\color{red}{\circle*{5}}}
        
        \put(160,140){\color{blue}{\circle*{5}}}
        \put(160,110){\color{red}{\line(1,1){30}}}
        \put(160,110){\color{red}{\line(-1,1){30}}}
        \put(160,170){\color{red}{\line(-1,-1){30}}}
        \put(160,170){\color{red}{\line(1,-1){30}}}
        
        \put(200,170){\color{blue}{\circle*{5}}}
        \put(200,140){\color{red}{\line(1,1){30}}}
        \put(200,140){\color{red}{\line(-1,1){30}}}
        \put(200,200){\color{red}{\line(-1,-1){30}}}
        \put(200,200){\color{red}{\line(1,-1){30}}}
        
        \put(140,10){\color{red}{\circle*{5}}}
        \put(140,-20){\color{red}{\line(1,1){30}}}
        \put(140,-20){\color{red}{\line(-1,1){30}}}
        \put(140,40){\color{red}{\line(-1,-1){30}}}
        \put(140,40){\color{red}{\line(1,-1){30}}}
        
        \put(140,20){\color{red}{\circle{5}}}
        \put(140,10){\color{red}{\circle{5}}}
        \put(140,0){\color{red}{\circle{5}}}
        \put(140,-10){\color{red}{\circle{5}}}
        \put(140,-20){\color{red}{\circle{5}}}
        \put(150,10){\color{red}{\circle{5}}}
        \put(150,0){\color{red}{\circle{5}}}
        \put(150,-10){\color{red}{\circle{5}}}
        \put(160,0){\color{red}{\circle{5}}}
        
        \put(140,170){\color{red}{\circle*{5}}}
        \put(140,140){\color{red}{\line(1,1){30}}}
        \put(140,140){\color{red}{\line(-1,1){30}}}
        \put(140,200){\color{red}{\line(-1,-1){30}}}
        \put(140,200){\color{red}{\line(1,-1){30}}}

\end{picture}

\caption{Case 2 after the shifts} \label{case22}
\end{minipage}
\end{figure}
\textbf{Case 3:} If the slope of $L_2$ is greater than or equal to the slope of $L_1$, then we can shift all vertices in  $\phi_k^{-1}(\bar{\ell}) \cap Y_{m+2k,n+2k}$ that lie on $L_1$ to the east one unit as shown in Figure \ref{Case31} which causes $t$ to dominate $u$.
This allows us to remove $s$ from our dominating set, but it also creates a diagonal of uncovered vertices as shown in Figure \ref{Case32}. 

\begin{figure}[H]
\centering
\begin{minipage}[t]{.45\textwidth}
\centering

\begin{picture}(210,230)(100,-10)

\linethickness{0.4mm}
         \put(110,200){\color{green}{\line(1,0){200}}}
         \put(110,0){\color{green}{\line(0,1){200}}}
        
         \put(290,200){\color{blue}{\line(-4,-3){160}}}
         
         \put(140,170){\color{red}{\line(1,0){170}}}
         \put(140,0){\color{red}{\line(0,1){170}}}

\linethickness{.1 mm}
\multiput(110,10)(0,10){19}{\line(1,0){200}}
\multiput(120,0)(10,0){19}{\line(0,1){200}}
      
      \put(240,100){\color{blue}{\circle*{5}}}
        \put(280,130){\color{blue}{\circle*{5}}}
        \put(110,190){\color{blue}{\circle*{5}}}
      
      \put(200,70){\color{blue}{\circle*{5}}}
        \put(160,40){\color{blue}{\circle*{5}}}

        \put(210,140){\color{blue}{\circle*{5}}}
        \put(250,170){\color{blue}{\circle*{5}}}
        \put(290,200){\color{red}{\circle*{5}}}
        \put(170,110){\color{blue}{\circle*{5}}}

        \put(190,0){\color{blue}{\circle*{5}}}
        \put(230,30){\color{blue}{\circle*{5}}}
        \put(270,60){\color{blue}{\circle*{5}}}
        \put(310,90){\color{blue}{\circle*{5}}}

        \put(120,10){\color{blue}{\circle*{5}}}
        
        \put(140,150){\color{blue}{\circle*{5}}}
        \put(180,180){\color{blue}{\circle*{5}}}
        \put(130,80){\color{blue}{\circle*{5}}}
        
        \put(250,170){\vector(1,0){10}}
    \put(210,140){\vector(1,0){10}}
   \put(170,110){\vector(1,0){10}}
    \put(130,80){\vector(1,0){10}}
        
   \put(244,172){$t$}
        \put(282,163){$u$}
        \put(250,140){\color{red}{\line(1,1){30}}}
        \put(250,140){\color{red}{\line(-1,1){30}}}
        \put(250,200){\color{red}{\line(-1,-1){30}}}
        \put(250,200){\color{red}{\line(1,-1){30}}}

        \put(295,200){$s$}
        \put(290,170){\color{red}{\line(1,1){30}}}
        \put(290,170){\color{red}{\line(-1,1){30}}}
        \put(290,230){\color{red}{\line(-1,-1){30}}}
        \put(290,230){\color{red}{\line(1,-1){30}}}
         
        \put(140,120){\color{red}{\line(1,1){30}}}
        
        \put(110,160){\color{red}{\line(1,1){30}}}
        \put(110,160){\color{red}{\line(-1,1){30}}}
        \put(110,220){\color{red}{\line(-1,-1){30}}}
        \put(110,220){\color{red}{\line(1,-1){30}}}
   
        \put(140,120){\color{red}{\line(-1,1){30}}}
        \put(140,180){\color{red}{\line(-1,-1){30}}}
        \put(140,180){\color{red}{\line(1,-1){30}}}
        
        \put(180,150){\color{red}{\line(1,1){30}}}
        \put(180,150){\color{red}{\line(-1,1){30}}}
        \put(180,210){\color{red}{\line(-1,-1){30}}}
        \put(180,210){\color{red}{\line(1,-1){30}}}
        
        \put(210,110){\color{red}{\line(1,1){30}}}
        \put(210,110){\color{red}{\line(-1,1){30}}}
        \put(210,170){\color{red}{\line(-1,-1){30}}}
        \put(210,170){\color{red}{\line(1,-1){30}}}
        
        \put(170,80){\color{red}{\line(1,1){30}}}
        \put(170,80){\color{red}{\line(-1,1){30}}}
        \put(170,140){\color{red}{\line(-1,-1){30}}}
        \put(170,140){\color{red}{\line(1,-1){30}}}
        
        \put(122,82){$z$}
        \put(190,130){$L_1=L_2$}
        \put(130,50){\color{red}{\line(1,1){30}}}
        \put(130,50){\color{red}{\line(-1,1){30}}}
        \put(130,110){\color{red}{\line(-1,-1){30}}}
        \put(130,110){\color{red}{\line(1,-1){30}}}
\put(290,170){\color{blue}{\circle{5}}}
\end{picture}

\caption{Case 3 before shifts} \label{Case31}
\end{minipage} \hfill
\begin{minipage}[t]{.45\textwidth}
\centering
\begin{picture}(210,230)(100,-10)
\linethickness{0.4mm}
         \put(110,200){\color{green}{\line(1,0){200}}}
         \put(110,0){\color{green}{\line(0,1){200}}}
             \put(290,200){\color{blue}{\line(-4,-3){160}}}

         \put(140,170){\color{red}{\line(1,0){170}}}
         \put(140,0){\color{red}{\line(0,1){170}}}

\linethickness{.1 mm}
\multiput(110,10)(0,10){19}{\line(1,0){200}}
\multiput(120,0)(10,0){19}{\line(0,1){200}}
      
        \put(240,100){\color{blue}{\circle*{5}}}
        \put(280,130){\color{blue}{\circle*{5}}}
        \put(110,190){\color{blue}{\circle*{5}}}
      
        \put(200,70){\color{blue}{\circle*{5}}}
        \put(160,40){\color{blue}{\circle*{5}}}

        \put(220,140){\color{blue}{\circle*{5}}}
        \put(260,170){\color{blue}{\circle*{5}}}
        \put(290,200){\color{red}{\circle*{5}}}
        \put(180,110){\color{blue}{\circle*{5}}}

        \put(190,0){\color{blue}{\circle*{5}}}
        \put(230,30){\color{blue}{\circle*{5}}}
        \put(270,60){\color{blue}{\circle*{5}}}
        \put(310,90){\color{blue}{\circle*{5}}}

        \put(120,10){\color{blue}{\circle*{5}}}
        
        \put(140,150){\color{blue}{\circle*{5}}}
        \put(180,180){\color{blue}{\circle*{5}}}
        \put(140,80){\color{blue}{\circle*{5}}}
        \put(198,138){$L_2$}
        
        \put(140,150){\vector(0,-1){10}}
        \put(180,180){\vector(0,-1){10}}    
        \put(110,190){\vector(0,-1){10}}  
        \put(210,170){\color{black}{\circle{5}}}
        \put(200,160){\color{black}{\circle{5}}}
        \put(190,150){\color{black}{\circle{5}}}
       
       \put(170,140){\color{black}{\circle{5}}}
       \put(160,130){\color{black}{\circle{5}}}
        \put(150,120){\color{black}{\circle{5}}}
        
        \put(110,160){\color{red}{\line(1,1){30}}}
        \put(110,160){\color{red}{\line(-1,1){30}}}
        \put(110,220){\color{red}{\line(-1,-1){30}}}
        \put(110,220){\color{red}{\line(1,-1){30}}}
   
        \put(254,172){$t$}
        \put(282,163){$u$}
        \put(260,140){\color{red}{\line(1,1){30}}}
        \put(260,140){\color{red}{\line(-1,1){30}}}
        \put(260,200){\color{red}{\line(-1,-1){30}}}
        \put(260,200){\color{red}{\line(1,-1){30}}}
        
        \put(295,200){$s$}
        \put(290,170){\color{red}{\line(1,1){30}}}
        \put(290,170){\color{red}{\line(-1,1){30}}}
        \put(290,230){\color{red}{\line(-1,-1){30}}}
        \put(290,230){\color{red}{\line(1,-1){30}}}
         
        \put(140,120){\color{red}{\line(-1,1){30}}}
        \put(140,120){\color{red}{\line(1,1){30}}}
        \put(140,180){\color{red}{\line(-1,-1){30}}}
        \put(140,180){\color{red}{\line(1,-1){30}}}
        
        \put(180,150){\color{red}{\line(1,1){30}}}
        \put(180,150){\color{red}{\line(-1,1){30}}}
        \put(180,210){\color{red}{\line(-1,-1){30}}}
        \put(180,210){\color{red}{\line(1,-1){30}}}
        
        \put(220,110){\color{red}{\line(1,1){30}}}
        \put(220,110){\color{red}{\line(-1,1){30}}}
        \put(220,170){\color{red}{\line(-1,-1){30}}}
        \put(220,170){\color{red}{\line(1,-1){30}}}
        
        \put(180,80){\color{red}{\line(1,1){30}}}
        \put(180,80){\color{red}{\line(-1,1){30}}}
        \put(180,140){\color{red}{\line(-1,-1){30}}}
        \put(180,140){\color{red}{\line(1,-1){30}}}

        \put(140,50){\color{red}{\line(1,1){30}}}
        \put(140,50){\color{red}{\line(-1,1){30}}}
        \put(140,110){\color{red}{\line(-1,-1){30}}}
        \put(140,110){\color{red}{\line(1,-1){30}}}
        \put(290,170){\color{blue}{\circle{5}}}
\end{picture}

\caption{Case 3 after first shift} \label{Case32}
\end{minipage}
\end{figure}

Now we take the vertices in  $\phi_k^{-1}(\bar{\ell}) \cap Y_{m+2k,n+2k}$ that are strictly northwest of $L_1$ and shift them down one unit. This shift dominates all of the vertices on the undominated diagonal. 
We then move every vertex in this dominating set that lies outside of $G_{m,n}$ to its nearest neighbor inside $G_{m,n}$  to obtain a dominating set completely contained in $G_{m,n}$.

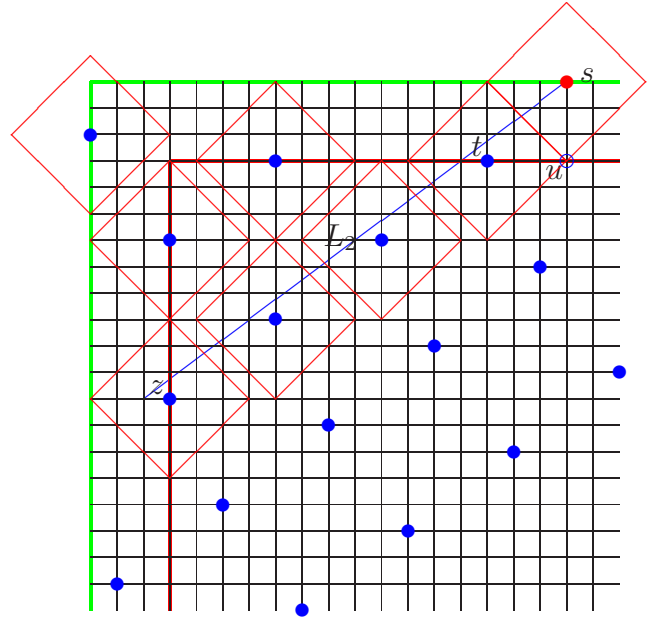
\begin{figure}[H]
\centering
\begin{picture}(210,230)(100,-10)

\linethickness{0.4mm}
         \put(110,200){\color{green}{\line(1,0){200}}}
         \put(110,0){\color{green}{\line(0,1){200}}}
             \put(290,200){\color{blue}{\line(-4,-3){160}}}
         
         \put(140,170){\color{red}{\line(1,0){170}}}
         \put(140,0){\color{red}{\line(0,1){170}}}

\linethickness{.1 mm}
\multiput(110,10)(0,10){19}{\line(1,0){200}}
\multiput(120,0)(10,0){19}{\line(0,1){200}}
      
        \put(240,100){\color{blue}{\circle*{5}}}
        \put(280,130){\color{blue}{\circle*{5}}}
        \put(110,180){\color{blue}{\circle*{5}}}
      
        \put(200,70){\color{blue}{\circle*{5}}}
        \put(160,40){\color{blue}{\circle*{5}}}

        \put(220,140){\color{blue}{\circle*{5}}}
        \put(260,170){\color{blue}{\circle*{5}}}
        \put(290,200){\color{red}{\circle*{5}}}
        \put(180,110){\color{blue}{\circle*{5}}}

        \put(190,0){\color{blue}{\circle*{5}}}
        \put(230,30){\color{blue}{\circle*{5}}}
        \put(270,60){\color{blue}{\circle*{5}}}
        \put(310,90){\color{blue}{\circle*{5}}}

        \put(120,10){\color{blue}{\circle*{5}}}
        
        \put(140,140){\color{blue}{\circle*{5}}}
        \put(180,170){\color{blue}{\circle*{5}}}
        \put(140,80){\color{blue}{\circle*{5}}}
        \put(198,138){$L_2$}

        \put(110,150){\color{red}{\line(1,1){30}}}
        \put(110,150){\color{red}{\line(-1,1){30}}}
        \put(110,210){\color{red}{\line(-1,-1){30}}}
        \put(110,210){\color{red}{\line(1,-1){30}}}
   
        \put(254,172){$t$}
        \put(282,163){$u$}
        \put(260,140){\color{red}{\line(1,1){30}}}
        \put(260,140){\color{red}{\line(-1,1){30}}}
        \put(260,200){\color{red}{\line(-1,-1){30}}}
        \put(260,200){\color{red}{\line(1,-1){30}}}
        
        \put(295,200){$s$}
        \put(290,170){\color{red}{\line(1,1){30}}}
        \put(290,170){\color{red}{\line(-1,1){30}}}
        \put(290,230){\color{red}{\line(-1,-1){30}}}
        \put(290,230){\color{red}{\line(1,-1){30}}}
         
        \put(140,110){\color{red}{\line(-1,1){30}}}
        \put(140,110){\color{red}{\line(1,1){30}}}
        \put(140,170){\color{red}{\line(-1,-1){30}}}
        \put(140,170){\color{red}{\line(1,-1){30}}}
        
        \put(180,140){\color{red}{\line(1,1){30}}}
        \put(180,140){\color{red}{\line(-1,1){30}}}
        \put(180,200){\color{red}{\line(-1,-1){30}}}
        \put(180,200){\color{red}{\line(1,-1){30}}}
        
        \put(220,110){\color{red}{\line(1,1){30}}}
        \put(220,110){\color{red}{\line(-1,1){30}}}
        \put(220,170){\color{red}{\line(-1,-1){30}}}
        \put(220,170){\color{red}{\line(1,-1){30}}}
        
        \put(180,80){\color{red}{\line(1,1){30}}}
        \put(180,80){\color{red}{\line(-1,1){30}}}
        \put(180,140){\color{red}{\line(-1,-1){30}}}
        \put(180,140){\color{red}{\line(1,-1){30}}}

        \put(132,82){$z$}
        \put(140,50){\color{red}{\line(1,1){30}}}
        \put(140,50){\color{red}{\line(-1,1){30}}}
        \put(140,110){\color{red}{\line(-1,-1){30}}}
        \put(140,110){\color{red}{\line(1,-1){30}}}
        \put(290,170){\color{blue}{\circle{5}}}
\end{picture}
\caption{Case 3 after second shift} \label{Case33}
\end{figure}

In Cases 1, 2, and 3, we have shown how to remove at least one vertex from the northwest corner of 
the dominating set $\phi^{-1}_k \left( \bar \ell \right) \cap Y_{m+2k,n+2k}$ for any $\bar \ell \in \ZZ_p$, and the other 
four corners look the same up to isomorphism. This proves that we can remove at least four vertices from
$\phi^{-1}_k \left( \bar \ell \right) \cap Y_{m+2k,n+2k}$ provided the grid $G_{m,n}$ is large enough so that the corners do not overlap.
\end{proof}

Note that the example illustrated in Figures \ref{Case31}-\ref{Case33} shows that it is sometimes possible to remove two vertices from a corner of $\phi^{-1}_k \left( \bar \ell \right) \cap Y_{m+2k,n+2k}$ when 
the slope of $L_1$ is greater than or equal to that of $L_2$, because the vertex in the northwest corner of Figure \ref{Case33} can also be removed from the dominating set. 
So there are instances where we can remove five vertices from $\phi^{-1}_k \left( \bar \ell \right) \cap Y_{m+2k,n+2k}$ and still dominate $G_{m,n}$, but that is not the case in general.  

We are now ready to prove our main result.  
\begin{theorem} \label{Maintheorem}
Assume that $m$ and $n$ are both greater than $2p$ where $p = 2k^2+2k+1$.  Then the $k$-distance domination number of an $m \times n$ grid graph $G_{m,n}$ is bounded above by 
\[ \gamma_{k} (G_{m,n} ) \leq  \left \lfloor  \frac{ (m+2k)(n+2k) }{ p} \right \rfloor -4  .\]
\end{theorem}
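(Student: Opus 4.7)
The plan is to derive Theorem \ref{Maintheorem} as an almost immediate consequence of the two main technical results already established, namely Corollary \ref{boundY} and Lemma \ref{remove}. All of the substantive work has been done in Sections \ref{section3} and \ref{section4}, so the proof amounts to verifying that the hypotheses line up and then chaining the two bounds together.

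First I would invoke Corollary \ref{boundY} to choose a residue class $\bar{\ell} \in \ZZ_p$ for which the set $D := \phi_k^{-1}(\bar{\ell}) \cap Y_{m+2k,n+2k}$ satisfies $|D| \leq \lfloor (m+2k)(n+2k)/p \rfloor$. As observed at the end of Section \ref{section2}, because $\phi_k^{-1}(\bar{\ell})$ is a $k$-distance dominating set of the ambient lattice $\ZZ^2$ and every vertex of $G_{m,n}$ has all of its $k$-distance neighbors contained in $Y_{m+2k,n+2k}$, this intersection $D$ is itself a $k$-distance dominating set of $G_{m,n}$. This already yields the intermediate bound $\gamma_k(G_{m,n}) \leq \lfloor (m+2k)(n+2k)/p \rfloor$, which by itself improves on the previous results in the literature.

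Next I would apply Lemma \ref{remove} to the specific dominating set $D$ singled out above. The hypothesis $m, n > 2p$ of Lemma \ref{remove} coincides with the assumption of Theorem \ref{Maintheorem}, so the lemma guarantees that one vertex can be removed from each of the four corners of $D$ while the remaining set continues to $k$-distance dominate $G_{m,n}$. The resulting set has cardinality at most $|D| - 4 \leq \lfloor (m+2k)(n+2k)/p \rfloor - 4$, giving the stated upper bound on $\gamma_k(G_{m,n})$.

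The only thing that needs genuine (if minor) verification is that the four corner operations can truly be carried out simultaneously, i.e. that the local shifts used in any one of the three cases of Lemma \ref{remove} near one corner do not interfere with the vertices that are being modified near a different corner. This is precisely where the hypothesis $m, n > 2p$ is used: every shift in the proof of Lemma \ref{remove} is confined to a window of width at most $p$ around the relevant corner, so when both grid dimensions exceed $2p$ the four altered regions are disjoint and the four deletions may be performed independently. Once this bookkeeping is recorded, Theorem \ref{Maintheorem} follows immediately from combining Corollary \ref{boundY} with Lemma \ref{remove}.
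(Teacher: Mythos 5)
Your proposal is correct and follows essentially the same route as the paper: the paper's proof of Theorem \ref{Maintheorem} likewise just combines Corollary \ref{boundY} with Lemma \ref{remove}, with the non-interference of the four corner shifts under the hypothesis $m,n>2p$ already handled inside the proof of Lemma \ref{remove}. No gaps.
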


\begin{proof}
Corollary \ref{boundY} shows that for some $\bar \ell \in \ZZ_p$ the set $\phi^{-1}_k \left( \bar \ell \right) \cap Y_{m+2k,n+2k}$ contains at most $ \left \lfloor  \frac{ (m+2k)(n+2k) }{ p} \right \rfloor$ vertices.
Lemma \ref{remove} shows that if $m$ and $n$ are both greater than $2p$ then we can remove at least 4 vertices from the set  $\phi^{-1}_k \left( \bar \ell \right) \cap Y_{m+2k,n+2k}$  and still dominate $G_{m,n}$.
Thus we have shown $\gamma_{k} (G_{m,n} ) \leq  \left \lfloor  \frac{ (m+2k)(n+2k) }{ p} \right \rfloor -4 $.  
\end{proof}

\section{Acknowledgements} 
We thank our adviser, Erik Insko, for his hours of support on this project.  We thank Dr. Katie Johnson for many helpful comments on an earlier draft of this paper.
Finally, we thank USTARS 2014 for giving us the opportunity us to present this paper.

\end{document}